\documentclass[12pt]{amsart}
\usepackage[dvipdfmx]{xcolor,graphicx}
\usepackage{amsmath,amssymb,amsthm,mathrsfs,ascmac,comment,lineno}
\usepackage[top=30truemm,bottom=30truemm,left=30truemm,right=30truemm]{geometry}
\usepackage[all]{xy}
\usepackage[capitalize]{cleveref}

\newcommand{\ctext}[1]{\raise0.2ex\hbox{\textcircled{\scriptsize{#1}}}}
\newcommand{\hi}{\mathchar`-}

\newcommand{\W}[1]{\widetilde{#1}}
\newcommand{\Q}{\mathbb{Q}}
\newcommand{\R}{\mathbb{R}}
\newcommand{\B}{\mathbb{B}}
\newcommand{\Z}{\mathbb{Z}}
\newcommand{\e}{\epsilon}
\newcommand{\de}{\delta}
\newcommand{\s}{\sigma}
\newcommand{\z}{\zeta}
\newcommand{\G}{\textup{Gal}}
\newcommand{\K}{RE_n^+}
\newcommand{\vol}{\textup{vol}}

\theoremstyle{plain}
\newtheorem{defi}{Definition}[section]
\newtheorem{theo}[defi]{Theorem}
\newtheorem{prop}[defi]{Proposition}
\newtheorem{lem}[defi]{Lemma}

\newtheorem{conj}[defi]{Conjecture}

\theoremstyle{definition}

\newtheorem{rem}[defi]{Remark}

\title{Generalized Pell's equations
and Weber's class number problem}

\makeatletter
\@namedef{subjclassname@2020}{
\textup{2020} Mathematics Subject Classification}
\makeatother 

\author{Hyuga Yoshizaki}
\email{yoshizaki.hyuga@gmail.com}
\address{Department of Mathematics, Graduate school of science and Technology, Tokyo University of Science, 2641, Yamazaki, Noda-shi, 278-8510, Chiba, Japan}

\subjclass[2020]{Primary 
11J70, 
11D57, 
11R29; 
Secondary
11R18, 
11R27. 
}

\keywords{Pell's equation, Continued fraction, Weber's class number problem.}

\date{}

\begin{document}


\begin{abstract}
We study a generalization of Pell's equation, whose coefficients are certain algebraic integers.
Let $X_0=0$ and $X_n=\sqrt{2+X_{n-1}}$ for each $n\in \Z_{\ge 1}$.
We study the $\Z[X_{n-1}]$-solutions of the equation $x^2-X_n^2y^2=1$.
By imitating the solution to the classical Pell's equation, we introduce new continued fraction expansions for $X_n$ over $\Z[X_{n-1}]$ and
obtain an explicit solution of the generalized Pell's equation.
In addition, we show that our explicit solution generates all the solutions if and only if the answer to Weber's class number problem is affirmative.
We also obtain a congruence relation for the ratios of the class numbers of the $\Z_2\hi$extension over the rationals and show the convergence of the class numbers in $\Z_2$.
\end{abstract}
\maketitle
\tableofcontents
\section{Introduction}
\label{intro}
{
For a non-square positive integer $m$, {it is well-known that the solutions in integers of Pell's equation
\[
x^2-my^2=1
\]
are given by the regular continued fraction expansion of $\sqrt{m}$ (cf.~\cref{ClassicalMethod}).}
The aim of this paper is to study the $\Z[X_{n-1}]$-solutions of a generalization of Pell's equation:
\begin{equation} \label{gpell}
  x^2-X_n^2y^2=1,
\end{equation}
where $X_n=2\cos(\pi/2^{n+1})$ satisfies $X_0=0$ and $X_{n+1}=\sqrt{2+X_n}$.
For $n=1$, this equation is a classical Pell's equation $x^2-2y^2=1$.}

Now we explain our main results.
First, we give a new continued fraction expansion of $X_n$ (\cref{convX}) as follows;
\[
 {X_n=[1,\overline{2(1+X_{n-1})^{-1},2}]}
\]
We obtain an explicit solution of \cref{gpell} as
\[
(x,y)=(1+2(1+X_{n-1})^{-1},2(1+X_{n-1})^{-1})
\]
by imitating the classical method (cf. Section 2).
We conjecture that our explicit solution ``generates'' all the solutions of \cref{gpell} (\cref{conj}).

{Secondly, we investigate the relation between our conjecture and Weber's class number problem, which asks the class number of $\B_n:=\Q(X_n)$.
The class numbers have been determined to be $1$ for the cases $0\leq n \leq 6$ (see \cite[Theorem 2.1]{Miller} for the case $n=6$) and
there are infinitely many prime numbers that do not divide the class numbers for all $n$ (cf.~\cite{Horie07}, \cite{Fukuda-Komatsu3} and \cite{Morisawa-Okazaki}).
Therefore, it is conjectured that the class number of $\B_n$ is $1$ for all $n$ (Weber's conjecture).
We show that our conjecture is equivalent to  Weber's conjecture  (\cref{C-W}).}
In addition, we show a certain minimality property of the explicit solutions (\cref{min}). 

{Thirdly, we obtain a congruence relation for the class numbers
\[
\frac{h_n}{h_{n-1}}\equiv 1 \pmod{2^n}
\]
for all $n\geq1$ by considering the Galois action on the group generated by our explicit solution (\cref{5-2}).}
By this result, we have that the sequence of the class numbers $\{h_n\}_{n\ge0}$ converges in $\Z_2$.
{This is a rediscovery of Kisilevsky's result \cite[Corollary 2]{Kisilevsky} in a specific case from a different approach (see \cref{Overlap} for details).}

{Finally, we state a conjecture (\cref{L-minConj}) that concerns the ``sizes'' of our explicit solution, and give observations on the conjecture.
By assuming the conjecture, we present a contribution to Weber's conjecture and \cref{conj}.}


\section{Classical method}\label{ClassicalMethod}
In this section, we briefly recall the classical method for Pell's equation (see \cite[Ch.7, \S 7.8]{NMZ} for detail).
For a non-square positive integer $m$, we consider Pell's equation
\begin{equation}\label{classicalPell}
  x^2-my^2=1.
\end{equation}
By mapping $(x,y)$ to $x+\sqrt{m}y$, the solutions of Pell's equation are embedded in $\Z[\sqrt{m}]$, and we set $P_m$ its image.
Since $P_m$ forms a subgroup of the multiplicative group $\Z[\sqrt{m}]^*$ and has a torsion element $-1$, $P_m$ is isomorphic to $\Z/2\Z\oplus\Z$ by Dirichlet's unit theorem.
A {\it fundamental solution} of Pell's equation is defined as a corresponding solution to a generator of $P_m/(\Z/2\Z)\cong\Z$.
It is classically known that a fundamental solution is given by the regular continued fraction of $\sqrt{m}$.

Let
\[
[a_0,a_1,a_2,\dots]=a_0+\cfrac{1}{a_1+\cfrac{1}{a_2+\ddots}}
\]
be a continued fraction $ (a_i\in \Z)$.
Let $p_{-1}=1,p_0=a_0$ and $q_{-1}=0,q_0=1$. For a positive integer $k$, we define $p_k$ and $q_k$ as followings;
\[
p_k=a_kp_{k-1}+p_{k-2},
\]
\[
q_k=a_kq_{k-1}+q_{k-2}.
\]
Then, it holds $p_k/q_k=[a_0,\dots,a_k]$, and the rational number $p_k/q_k$ called the $k$-th convergent of the continued fraction.
It is well-known that the regular continued fraction expansion of $\sqrt{m}$ is of the form
\[
\sqrt{m}=[a_0,\overline{a_1,...,a_l}]:=[a_0,a_1,\dots,a_l,a_1,\dots,a_l,\dots]
\]
and $l$ is called the period of $\sqrt{m}$ if we take the minimal $l$.
Then we obtain a fundamental solution of Pell's equation
\[
  (p,q) = \begin{cases}
    (p_{l-1},q_{l-1}) & (l\text{: even}) \\
    (p_{2l-1},q_{2l-1}) & (l\text{: odd}).
  \end{cases}
\]

{In \cref{Observations}, we observe a characterization of a fundamental solution of \cref{gpell}.
For comparison, we explain why the regular continued fraction expansion of $\sqrt{m}$ gives a fundamental solution.
A solution $(a,b)$ is a fundamental solution if and only if
\begin{equation}\label{ClassicalMinimal}
  |\log |a+\sqrt{m}b||=\min\{|\log|x+\sqrt{m}y|| \mid  x,y\in \Z,\ x^2-my^2=1 \},
\end{equation}
or equivalently,
\[
  |a|=\min\{|x| \in \Z\mid x\neq 1,\ x^2-my^2=1\}.
\]
On the other hand, the regular continued fraction of $\sqrt{m}$ gives a best approximation to $\sqrt{m}$ in the following sense.
\begin{defi}[{best approximation, cf.~\cite[p.9]{Lang}}]
  Let $\alpha$ be an irrational number.
  A best approximation to $\alpha$ is a rational number $p/q$ ($q>0$) such that
  for any rational number $p'/q'$ with $1\leq q' \leq q$, we have
  \[
    |q\alpha-p|<|q'\alpha-p'|.
  \]
\end{defi}
\begin{theo}[{cf.~\cite[Theorem 6]{Lang}}]\label{BA}
  All of best approximations to $\alpha$ are convergents of the regular continued fraction expansion of $\alpha$.
\end{theo}
Let $(x,y)\neq (\pm1,0)$ be a solution of the \cref{classicalPell}.
Then $x/y$ satisfies
\begin{equation}\label{Pell-ineq}
  |\sqrt{m}-\frac{x}{y}|<\frac{1}{2y^2}.  
\end{equation}
If a rational number satisfies the inequality (\ref{Pell-ineq}), then the rational number is a best approximation to $\sqrt{m}$ (cf.~\cite[Corollary 2]{Lang}).
Thus we see that $x/y$ is a convergent of $\sqrt{m}$, and there exists an integer $n$ such that $x/y=p_n/q_n$.
By the theory of continued fraction, if the period $l$ of the regular continued fraction expansion of $\sqrt{m}$ is even (resp. odd),
then $l-1$ (resp. $2l-1$) is the index of the convergent which has the smallest numerator in the set of convergents that can be solutions to \cref{classicalPell},
that is, $(p_{l-1}, q_{l-1})$ (resp. $(p_{2l-1}, q_{2l-1})$) is a fundamental solution.
}

\section{Generalized Pell's equation}
We study the generalized Pell's equation
\[
x^2-X_n^2y^2=1
\]
with the $\Z[X_{n-1}]$-solutions by imitating the classical method.
We obtained a continued fraction expansion of $X_n$ over $\Z[X_{n-1}]$ by a new algorithm.
First, we prepare the algebraic property of $X_n$.


\subsection{Algebraic aspects of $X_n$}
For non-negative integer $n$, set $\B_n=\Q(X_n)$.
Since $X_n=\z_{2^{n+2}}+\z_{2^{n+2}}^{-1}$ we see that $\B_n$ is the maximal real subfield of $\Q(\z_{2^{n+2}})$ where $\z_{2^{n+2}}:=\exp(2\pi\sqrt{-1}/2^{n+2})$.
By the theory of cyclotomic field (see \cite[Ch.2]{Washington} in detail), we have that $\B_n$ is an algebraic number field of degree $2^n$, and Galois extension over $\Q$ with Galois group $\Z/2^n \Z$,
and the ring of integers of $\B_n$ is $\Z[X_n]$.
We see that $\B_n$ is a relative quadratic extension over $\B_{n-1}$.


\subsection{New continued fraction}
We define a new continued fraction expansion algorithm over $\Z[X_{n-1}]$.
For $n\ge 1$, we set $\beta_0=1$ and $\beta_k=2\cos(k\pi/2^n)$ for each $1\leq k \leq 2^{n-1}-1$. 
Then,
\begin{equation}\label{basis}
{\mathcal{B}_{n-1}=\{\beta_k\mid k=0,1,\cdots,2^{n-1}-1 \}}
\end{equation}
 is an integral basis of $\mathbb{Z}[X_{n-1}]$. By embedding
\[
\phi_n:\B_{n-1}\longrightarrow \mathbb{R}^{2^{n-1}};a\mapsto (\tau(a))_{\tau \in \G(\B_{n-1}/\Q)}, 
\]
the basis $\mathcal{B}_{n-1}$ is orthogonal in $\mathbb{R}^{2^{n-1}}$ (cf.~\cite[Lemma 6.3]{MO}), and $\mathbb{Z}[X_{n-1}]$ forms a complete lattice in $\mathbb{R}^{2^{n-1}}$.
Recall $X_n=\sqrt{2+X_{n-1}}$.
We define
\[
\phi_n(X_n)=(\sqrt{2+\tau(X_{n-1})})_{\tau \in \G(\B_{n-1}/\Q)}
\]
and {extend $\phi_n$ to
\[
\phi_n :\B_n\rightarrow \R^{2^{n-1}};a+X_nb\mapsto \phi_n(a)+\phi_n(X_n)\phi_n(b)
\]
for each $a,b \in \B_{n-1}$ where the sum and the multiplication are component-wise.
For each $x\in \R$, let ${\rm round}(x)$ denote the integer in $(x-1/2, x+1/2]$
We note that for each $\alpha \in \B_n$, we can write $\alpha=\sum_{k=0}^{2^{n-1}-1}r_k\beta_k$ uniquely for some $r_k \in \R$.
}

\begin{defi} \label{alg}
For $\alpha=\sum_{k=0}^{2^{n-1}-1}r_k\beta_k \in \B_n$, we define $\lfloor \alpha \rfloor=\sum_{k=0}^{2^{n-1}-1}{\rm round}(r_k)\beta_k \in \Z[X_{n-1}]$ and the sequence $(a_k)_{k\ge0}$ as
\begin{eqnarray*}
&&\alpha_0 =\alpha , a_0=\lfloor \alpha_0 \rfloor, \\
&&\alpha_m =(\alpha_{m-1}-a_{m-1})^{-1}, a_m=\lfloor \alpha_m \rfloor \ \ (m\ge 1).
\end{eqnarray*}
\end{defi}
{\begin{rem}\label{remark}
  By the orthogonality of $\phi_n(\mathcal{B}_{n-1})$, $\phi_n(\lfloor \alpha \rfloor)$ is one of the closest point to $\phi_n(\alpha)$ in $\phi_n(\mathbb{Z}[X_{n-1}])$ for Euclidean distance of $\R^{2^{n-1}}$.
\end{rem}
}

\begin{prop} \label{contX}
Let $\alpha =X_n\in \B_n$. Then we have
\begin{eqnarray*}
a_0&=&1, \\
a_{2k-1}&=&2(1+X_{n-1})^{-1}, \\
a_{2k}&=&2
\end{eqnarray*}
for positive 	integers $k$.
\end{prop}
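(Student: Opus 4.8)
The backbone of the argument is the single algebraic relation $X_n^2 = 2 + X_{n-1}$, which follows from $X_n = \z_{2^{n+2}} + \z_{2^{n+2}}^{-1}$ together with $\z_{2^{n+2}}^2 = \z_{2^{n+1}}$. Granting for the moment the three floor values $a_0 = 1$, $a_1 = 2(1+X_{n-1})^{-1}$ and $a_2 = 2$, the whole sequence is forced and becomes periodic after one step. Indeed $\alpha_1 = (\alpha_0 - a_0)^{-1} = (X_n-1)^{-1}$, and rationalizing via $X_n^2 - 1 = 1 + X_{n-1}$ gives $\alpha_1 = (X_n+1)(1+X_{n-1})^{-1}$; subtracting $a_1$ yields $\alpha_1 - a_1 = (X_n-1)(1+X_{n-1})^{-1} = (X_n+1)^{-1}$, so $\alpha_2 = (\alpha_1-a_1)^{-1} = X_n + 1$; subtracting $a_2 = 2$ gives $\alpha_2 - a_2 = X_n - 1$, whence $\alpha_3 = (X_n-1)^{-1} = \alpha_1$. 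Since the recursion is deterministic, $\alpha_3 = \alpha_1$ forces period two, $\alpha_{m+2} = \alpha_m$ for $m \ge 1$, and taking floors gives $a_{2k-1} = a_1$ and $a_{2k} = a_2$ for all $k \ge 1$, which is exactly the assertion. Thus the proposition reduces entirely to computing the three floors.

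The floors are nearest-lattice-point problems, and here I would exploit that $\{1, 2\cos(i\pi/2^n)\mid 1\le i\le 2^{n-1}-1\}$ is an \emph{orthogonal} integral basis of $\Z[X_{n-1}]$ under $\phi$. For an orthogonal basis the Voronoi cell of the lattice is a box, so the closest point of $\phi(\Z[X_{n-1}])$ to $\phi(\beta)$ is obtained by writing $\phi(\beta) = \sum_i c_i\,b_i$, where $b_0 := \phi(1)$, $b_i := \phi(2\cos(i\pi/2^n))$ and $c_i = \langle\phi(\beta),b_i\rangle/|b_i|^2$, and rounding each $c_i$ to the nearest integer independently. Two of the three floors are then cheap. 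First, translation by a lattice vector commutes with $\lfloor\cdot\rfloor$, and $1\in\Z[X_{n-1}]$, so $a_2 = \lfloor X_n+1\rfloor = \lfloor X_n\rfloor + 1 = a_0 + 1 = 2$ follows at once from $a_0 = 1$. Second, the telescoping identity $(1+X_k)(X_k-1) = X_k^2 - 1 = 1 + X_{k-1}$ together with $1 + X_0 = 1$ gives $(1+X_{n-1})^{-1} = \prod_{k=1}^{n-1}(X_k - 1) \in \Z[X_{n-1}]$, so $a_1 = 2(1+X_{n-1})^{-1}$ is genuinely a lattice point, as it must be.

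It therefore remains to verify $a_0 = 1$ and that $a_1$ is the closest lattice point to $\alpha_1$; equivalently, that the displacements $\phi(X_n-1) = \phi(X_n) - \phi(1)$ and $\phi((X_n+1)^{-1}) = \phi(\alpha_1) - \phi(a_1)$ lie in the Voronoi box, i.e. all of their $b_i$-coordinates lie strictly in $(-\tfrac12,\tfrac12)$ (strictness also yielding well-definedness, in the sense of the Remark). Using the extended embedding, the $\tau_a$-coordinate of $\phi(X_n)$ is $2\cos(a\pi/2^{n+1})$ and that of $\phi((X_n+1)^{-1})$ is $(2\cos(a\pi/2^{n+1})+1)^{-1}$, the index $a$ running over the odd residues $1\le a\le 2^n-1$. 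Expanding $\langle\cdot,b_i\rangle$ and applying product-to-sum, each $b_i$-coordinate becomes a closed trigonometric sum of the shape $\sum_{a\,\mathrm{odd}}\cos(m a\pi/2^{n+1})$ for $\phi(X_n-1)$, and a comparable sum weighted by $(2\cos(a\pi/2^{n+1})+1)^{-1}$ for $\phi((X_n+1)^{-1})$, which I would evaluate or bound as geometric sums in $\z_{2^{n+2}}$. The main obstacle is precisely these estimates, and the $b_0$-coordinate (the mean) is the delicate one: for $\phi((X_n+1)^{-1})$ it tends, as $n\to\infty$, to $\tfrac{2}{\pi}\int_0^{\pi/2}(2\cos\theta+1)^{-1}\,d\theta \approx 0.48$, only barely below the rounding threshold $\tfrac12$, so the bound must be proved uniformly in $n$ rather than estimated crudely. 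Once every $b_i$-coordinate of both displacements is shown to lie strictly inside $(-\tfrac12,\tfrac12)$, the floors $a_0 = 1$ and $a_1 = 2(1+X_{n-1})^{-1}$ are confirmed, and by the first paragraph the proposition follows.
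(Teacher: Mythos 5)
Your first two paragraphs are correct and match the skeleton of the paper's argument: the identity $X_n^2=2+X_{n-1}$ forces $\alpha_1=(X_n-1)^{-1}$, $\alpha_1-a_1=(X_n+1)^{-1}$, $\alpha_2=X_n+1$, $\alpha_3=\alpha_1$, so everything reduces to the assertions that the closest lattice point to $X_n-1$ and to $(1+X_n)^{-1}$ is $0$ (these are exactly the paper's claims (a) and (b)). Your observations that $a_2$ follows from $a_0$ by translation invariance of $\lfloor\cdot\rfloor$ and that $(1+X_{n-1})^{-1}=\prod_{k=1}^{n-1}(X_k-1)\in\mathbb{Z}[X_{n-1}]$ are worthwhile details that the paper leaves implicit.

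The genuine gap is that you never prove those two nearest-point statements, and they are essentially the entire content of the paper's proof. You explicitly defer them (``which I would evaluate or bound as geometric sums,'' ``the main obstacle is precisely these estimates''), and you correctly diagnose why they are not routine: the mean coordinate of $\phi\bigl((1+X_n)^{-1}\bigr)$ tends to $\tfrac{2}{\pi}\int_0^{\pi/2}(2\cos\theta+1)^{-1}\,d\theta=\tfrac{2}{\pi\sqrt3}\log(2+\sqrt3)\approx 0.484$, barely under the threshold $\tfrac12$, so no crude bound will close the argument. The paper does close it, by comparing each Riemann sum with the corresponding integral plus an explicitly decreasing correction term ($A_n$, $B_n$, $D_n$) and checking the finitely many small $n$ numerically; none of that is present in your write-up. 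Moreover, your plan of rounding each orthogonal coordinate independently creates extra work the paper avoids: for every basis direction $b_i$ with $i\ge 1$ you would have to bound an oscillatory sum $\sum_a \cos(\cdot)\cos(\cdot)$ weighted by $\sqrt{2+\tau(X_{n-1})}$ or $(2\cos(\cdot)+1)^{-1}$, whereas the paper simply bounds the \emph{total} Euclidean norm of the displacement by $\sqrt{2^n}/2$, which settles all coordinates along the length-$\sqrt{2^n}$ basis vectors simultaneously, and then treats the single remaining direction (along $1$) by the direct comparison with $\pm1$. As it stands, your proposal is a correct reduction plus a plan of attack, not a proof.
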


\begin{proof}
By \cref{remark}, 
it suffices to show that $\phi_n(0)$ is
\begin{itemize}
\item[(a)] a unique closest point to $\phi_n(\sqrt{2+X_{n-1}}-1)$ and
\item[(b)] a unique closest point to $\phi_n((\sqrt{2+X_{n-1}}-1)^{-1}-2(1+X_{n-1})^{-1})=\phi_n((1+\sqrt{2+X_{n-1}})^{-1})$
\end{itemize}
in $\phi_n(\mathbb{Z}[X_{n-1}])$.
For (a), since $\phi_n(\mathcal{B}_{n-1})$ is orthogonal in $\R^{2^{n-1}}$ and the length of $\phi_n(\beta_k)\ (k=1,\cdots ,2^{n-1}-1)$ are $\sqrt{2^n}$ (see \cite[Lemma 6.3]{MO}),
{it is enough to show that}
\begin{itemize}
\item[(a-1)] $||\sqrt{2+X_{n-1}}-1-0||<\sqrt{2^n}/2$ and
\item[(a-2)] $||\sqrt{2+X_{n-1}}-1-0||<||\sqrt{2+X_{n-1}}-1-(\pm1)||$.
\end{itemize}
(a-1). The left-hand side of the inequality is {$||\sqrt{2+X_{n-1}}-1||=\sqrt{2^n \mathfrak{A_n}/\pi}$, where
\[
\mathfrak{A_n}:=\frac{\pi}{2^n}\sum_{k=1}^{2^{n-1}}\left(2\cos\left(\frac{2k-1}{2^{n+1}}\pi \right)-1\right)^2
<\int_{-\frac{\pi}{2^{n+1}}}^{\frac{\pi}{2}+\frac{\pi}{2^{n+1}}}(2\cos x-1 )^2 dx=:I_n
\]
Now $(I_n)_{n\geq6}$ is decreasing with $I_6=0.762...<\pi/4$ and we can check numerically that $\mathfrak{A_n}<\pi/4$ for for the cases $1\le n\le 5$.}

(a-2). We show that
\begin{description}
\item[(a-2-i)] $||\sqrt{2+X_{n-1}}-1||<||\sqrt{2+X_{n-1}}-1-(+1)||$ and

\item[(a-2-ii)]  $||\sqrt{2+X_{n-1}}-1||<||\sqrt{2+X_{n-1}}-1-(-1)||$.
\end{description}
(a-2-i). Transform the inequality as following;
\begin{eqnarray*}
&&\sum_{k=1}^{2^{n-1}}\left(2\cos\left(\frac{2k-1}{2^{n+1}}\pi\right)-1\right)^2< \sum_{k=1}^{2^{n-1}}\left(2\cos\left(\frac{2k-1}{2^{n+1}}\pi\right)-2\right)^2 \\
&\Leftrightarrow&\frac{\pi}{2^{n-1}}\sum_{k=1}^{2^{n-1}}\cos\left(\frac{2k-1}{2^{n+1}}\pi\right)<\frac{3}{4}\pi.
\end{eqnarray*}
Since the proof of the inequality is almost the same as in case (a-1), using a comparison series-integral with $\cos x$, we omit it.

(a-2-ii). Similarly, we see that it suffices to show that
\[
1<\frac{8}{2^n}\sum_{k=1}^{2^{n-1}}\cos\left(\frac{2k-1}{2^{n+1}}\pi\right)
\]
for $n\ge1$.
In fact, we prove a more general case
\[
1<S_N:=\frac{4}{N}\sum_{k=1}^{N}\cos\left(\frac{2k-1}{4N}\pi\right)\ \ (N\ge1).
\]
For $N=1$, we have $S_1=4\cos(\pi/4)=2\sqrt{2}>1$. For $N\ge2$, a comparison series-integral gives that
\[
S_N\ge I_N:=\frac{8}{\pi}\int_{\frac{\pi}{4N}}^{\frac{2N-1}{4N}\pi}\cos x dx.
\]
Since $I_N={8/\pi}(\cos(\pi/(4N))-\sin(\pi/(4N)))$ and the function $x\mapsto \cos x-\sin x$ decreases in $[0,\pi/4]$, we have that $S_N\ge I_N >I_2>1$.

Similarly to the proof of  (a), we separate the proof of (b) into (b-1) and (b-2).

(b-1). We show that $\left|\left|(1+\sqrt{2+X_{n-1}})^{-1}\right|\right|<\sqrt{2^n}/2$,
which means that
\[
\frac{\pi}{2^n} \sum_{k=1}^{2^{n-1}} \left(\frac{1}{2\cos\left(\frac{2k-1}{2^{n+1}}\pi\right)+1}\right)^2<\frac{\pi}{4}.
\]
However, in the proof of  (b-2-ii), we show that $\pi/2^n \sum_{k=1}^{2^{n-1}} \left(2\cos\left(\tfrac{2k-1}{2^{n+1}}\pi\right)+1\right)^{-1}<\pi/4$ and this induces the statement because $2\cos(\frac{2k-1}{2^{n+1}}\pi)+1>1$ for all $1\le k\le 2^{n-1}$.

(b-2). Similarly to the proof of (a-2), we separate the proof into two cases.

(b-2-i). We show that $\left|\left|\left(1+\sqrt{2+X_{n-1}}\right)^{-1}\right|\right|<\left|\left|\left(1+\sqrt{2+X_{n-1}}\right)^{-1}-(-1)\right|\right|$. This is easy because
\begin{eqnarray*}
&&\sum_{k=1}^{2^{n-1}} \left(\left(\frac{1}{2\cos\left(\frac{2k-1}{2^{n+1}}\pi\right)+1}+1\right)^2-\left(\frac{1}{2\cos\left(\frac{2k-1}{2^{n+1}}\pi\right)+1}\right)^2\right) \\
&=&\sum_{k=1}^{2^{n-1}}\left(1+\frac{2}{2\cos\left(\frac{2k-1}{2^{n+1}}\pi\right)+1}\right)>0.
\end{eqnarray*}

(b-2-ii). $\left|\left|\left(1+\sqrt{2+X_{n-1}}\right)^{-1}\right|\right|<\left|\left|\left(1+\sqrt{2+X_{n-1}}\right)^{-1}-(+1)\right|\right|$. Similarly to the proof of (a-2-i), it suffices to show that
\[
\frac{\pi}{2^n} \sum_{k=1}^{2^{n-1}}\frac{1}{2\cos\left(\frac{2k-1}{2^{n+1}}\pi\right)+1}<\frac{\pi}{4}.
\]
Since the proof of the inequality is almost the same as in case (a-1), using a comparison series-integral with $1/(2\cos x+1)$, we omit it.
\end{proof}

\cref{contX} only provides a formal expansion. We see that it does converge.

\begin{theo} \label{convX}
For $n\ge 1$ and each $\tau \in \G(\B_{n-1}/\Q)$, we have
\[
\sqrt{2+\tau(X_{n-1})}=[1,\overline{2(1+\tau(X_{n-1}))^{-1},2}].
\]
Here, $[a_0,a_1,...]$ denotes $a_0+\cfrac{1}{a_1+\cdots}$ and
$[a_0,\dots,a_r,\overline{a_{r+1},\dots,a_{s}}]$ denotes the periodicity of the part $a_{r+1},\dots,a_{s}$, namely
\[
[a_0,\dots,a_r,\overline{a_{r+1},\dots,a_{s}}]=[a_0,\dots,a_r,a_{r+1},\dots,a_{s},a_{r+1},\dots,a_{s},a_{r+1},\dots,a_{s},\dots].
\]
\end{theo}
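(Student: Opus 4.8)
The plan is to fix an embedding $\tau\in G_{n-1}$ and treat the formal continued fraction $[1,\overline{2(1+\tau(X_{n-1}))^{-1},2}]$ as an ordinary continued fraction of \emph{real} numbers, proving two things: that $\sqrt{2+\tau(X_{n-1})}$ is the relevant fixed point of the associated period map, and that the convergents actually converge to it. Write $y=\tau(X_{n-1})$, $\beta=\sqrt{2+y}>0$ and $c=2(1+y)^{-1}$. Since $y=2\cos\bigl(\tfrac{(2k-1)\pi}{2^{n}}\bigr)$ lies in $(-2,2)$, and $1+y=0$ would force $\cos=-\tfrac12$, i.e.\ $2k-1=2^{n+1}/3\notin\Z$, we have $y\in(-2,2)$ with $y\neq-1$; in particular $c$ is a well-defined nonzero real.

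First I would verify the periodicity algebraically by iterating the exact continued-fraction map on $\beta$. Setting $\gamma_0=\beta$ and $\gamma_m=(\gamma_{m-1}-a_{m-1})^{-1}$ with $a_0=1,\ a_1=c,\ a_2=2$, a direct computation using $\beta^2-1=1+y$ gives
\[
\gamma_1=\frac{\beta+1}{1+y},\qquad \gamma_2=\beta+1,\qquad \gamma_3=\frac{1}{\beta-1}=\gamma_1,
\]
so the exact tails are periodic of period two and $\beta=1+\gamma_1^{-1}$. Equivalently $\gamma_1$ is a root of $2z^2-2cz-c=0$ and $\beta$ is the corresponding fixed point of the period map $T(z)=\dfrac{(2c+1)z+c}{2z+1}$; this pins down which of the two roots the expansion must represent, namely the positive square root rather than its algebraic conjugate.

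Next I would prove convergence of the convergents $p_k/q_k$ given by $p_k=a_kp_{k-1}+p_{k-2}$, $q_k=a_kq_{k-1}+q_{k-2}$. The standard identity $\beta-\frac{p_k}{q_k}=\dfrac{(-1)^k}{q_k(\gamma_{k+1}q_k+q_{k-1})}$ reduces everything to showing $|q_k|\to\infty$. For this I would use the period matrix $M=\begin{pmatrix}2c+1&c\\2&1\end{pmatrix}$, which has $\det M=1$ and eigenvalues $\lambda_\pm=(c+1)\pm\sqrt{c^2+2c}$. The crucial point is that $c^2+2c=c(c+2)>0$ for every admissible $y$: when $y>-1$ one has $c>0$, and when $-2<y<-1$ one checks $c<-2$. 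Hence the eigenvalues are real, distinct and reciprocal, so $|\lambda_+|\neq|\lambda_-|$; and since $(q_{2m},q_{2m-1})$ is the top row of $M^m$, it grows in norm like the dominant eigenvalue. Thus $|q_k|\to\infty$ and $p_k/q_k$ converges to the attracting fixed point of $T$, which by the previous paragraph is exactly $\beta$.

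The main obstacle is precisely this convergence step in the embeddings with $-2<\tau(X_{n-1})<-1$, where the partial quotient $c=2(1+\tau(X_{n-1}))^{-1}$ is negative and the classical positivity arguments (and sufficient conditions of Seidel--Stern or Pringsheim type) do not apply. The eigenvalue computation $c(c+2)>0$ is what rescues convergence in this regime and simultaneously guarantees that the attracting fixed point is the positive root $\sqrt{2+\tau(X_{n-1})}$ and not its conjugate. Once this is in hand, the claimed equality holds for every $\tau\in G_{n-1}$, which completes the proof.
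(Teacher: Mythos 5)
Your transfer-matrix approach is genuinely different from the paper's (which proves convergence by showing the even- and odd-indexed convergents are monotone and that $|q_k|\to\infty$ via an explicit sign-pattern induction on $p_k,q_k$, split into the cases $a_1>0$ and $a_1<-2$), and your eigenvalue computation does handle the growth of $|q_k|$ cleanly in both regimes $c>0$ and $c<-2$. But there is a genuine gap in the identification of the limit. Your first paragraph shows that $\beta=\sqrt{2+\tau(X_{n-1})}$ has periodic exact tails $\gamma_1,\gamma_2,\gamma_1,\dots$ compatible with the partial quotients $1,c,2,c,2,\dots$; however, the conjugate $-\beta$ has exactly the same property (its tails are $\tfrac{-1}{\beta+1},\,1-\beta,\,\tfrac{-1}{\beta+1},\dots$, i.e.\ the other root of the same quadratic $2z^2-2cz-c=0$), so this computation does not ``pin down'' which root the convergents approach. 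Likewise, $c(c+2)>0$ only tells you that the eigenvalues of $M$ are real, distinct and reciprocal; it does not by itself say which of the two fixed points of $T$ is attracting. For the same reason, the error formula $\beta-\frac{p_k}{q_k}=\frac{(-1)^k}{q_k(\gamma_{k+1}q_k+q_{k-1})}$ does not reduce the problem to $|q_k|\to\infty$ alone: if $\beta$ were the repelling fixed point, $\gamma_{k+1}q_k+q_{k-1}$ would decay like $|\lambda_+|^{-m}$ and the error would tend to a nonzero constant.

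The gap is fixable with one more short computation: the multiplier of $T$ at a fixed point $z$ is $T'(z)=(2z+1)^{-2}$, and $2\gamma_1+1=\frac{2}{\beta-1}+1=\frac{\beta+1}{\beta-1}$ has absolute value greater than $1$ for every $\beta=\sqrt{2+\tau(X_{n-1})}\in(0,2)$ with $\beta\neq1$, whereas at the other root $2z+1=\frac{\beta-1}{\beta+1}$ has absolute value less than $1$; hence $\gamma_1$ is the attracting fixed point and the convergents tend to $1+\gamma_1^{-1}=\beta$. (The paper instead settles the sign by proving $p_k/q_k>0$ for all $k$, so that the limit, being a nonnegative root of $b^2=2+\tau(X_{n-1})$, must be the positive square root.) With that step supplied, your argument is complete and arguably more uniform than the paper's Case-2 analysis.
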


\begin{rem}
  {\cref{convX} states that the above continued fraction converges in Euclidean space $\R^{2^{n-1}}\stackrel{\phi_n}{\hookleftarrow}\B_n$.
  Namely we get a continued fraction expansion of $\sqrt{2+X_{n-1}}$ over $\mathbb{Z}[X_{n-1}]$ for each metric induced by $\tau \in \G(\B_{n-1}/\Q)$.
  We could not make sure whether this algorithm gives a continued fraction expansion of any element of $\B_n$,
  and whether this algorithm terminates for any element of $\B_{n-1}$.}
\end{rem}

\begin{proof}
If the continued fraction $[1,\overline{2(1+\tau(X_{n-1}))^{-1},2}]$ converges, then we see that the numerical value of it is $\sqrt{2+\tau (X_{n-1})}$ by an easy calculation.
We show the convergence of $[1,\overline{2(1+\tau(X_{n-1}))^{-1},2}]$ for each $\tau \in \G(\B_{n-1}/\Q)$.
We check the conditions in \cite[Theorem 4.3]{BEJ}.
For $a\in \mathbb{C}$, we define
\[
D(a)=\begin{pmatrix} a & 1 \\ 1 & 0 \end{pmatrix}.
\]
For a continued fraction $[a_1, a_2,...,a_k]$, we define
\[
M([a_1, a_2,...,a_k])=D(a_1)D(a_2)\cdots D(a_k).
\]
We should check the followings for all $n\ge1$ and $\tau \in \G(\B_{n-1}/\Q)$;
\begin{itemize}
\item[(a)]$M([1,2(1+\tau(X_{n-1}))^{-1},2,0,-1,0])\neq \pm\begin{pmatrix}1, 0 \\0,1\end{pmatrix}$
\item[(b)] $|M([2(1+\tau(X_{n-1}))^{-1},2])_{2,2}|\le 1$
\item[(b')] $|M([2,2(1+\tau(X_{n-1}))^{-1}])_{2,2}|\le 1$
\item[(c)] ${\rm Tr}(M([1,2(1+\tau(X_{n-1}))^{-1},2,0,-1,0]))^2\ge4$
\end{itemize}
where $M_{2,2}$ denotes the $(2,2)$-element of a matrix $M$.
The first three (a), (b), and (b') are trivial. 
We note that
\[
  {{\rm Tr}(M([1,2(1+\tau(X_{n-1}))^{-1},2,0,-1,0]))^2}=4(2(1+\tau(X_{n-1}))^{-1}+1)^2.
\]
If $\tau(X_{n-1})>-1$, then we have $(2(1+\tau(X_{n-1}))^{-1}+1)^2\ge1$ and (c) holds.
Otherwise, we have that $-2<\tau(X_{n-1})<-1$. So we have $(1+\tau(X_{n-1}))^{-1}<-1$ and an easy calculation shows that (c) holds.
\end{proof}

In the case $n=1$, the above theorem states that $\sqrt{2}=[1,\overline{2,2}]$ and this is a classical continued fraction expansion of $\sqrt{2}$.


\subsection{$\Z[X_{n-1}]\hi$solutions}\label{exsol}
By imitating the classical method, we formulate a conjecture for the $\Z[X_{n-1}]\hi$solutions of the generalized Pell's equation.
Since the period of $[1,\overline{2(1+X_{n-1})^{-1},2}]$ is $2$, we look at the first convergent
\[
\frac{p_1}{q_1}=\frac{1+2(1+X_{n-1})^{-1}}{2(1+X_{n-1})^{-1}}.
\]
It is easy to check that
\[
p_1^2-X_n^2q_1^2=1
\]
for all $n\ge1$.
We set
\[
\e_n=p_1+X_nq_1.
\]
We conjecture that the element $\e_n$ generates the $\Z[X_{n-1}]\hi$solutions as a Galois module.
\begin{conj}\label{conj}
  The $\Z[X_{n-1}]$-solutions of the generalized Pell's equation $x^2-X_n^2y^2=1$ is a $\G(\B_n/\Q)$-module generated by $-1$ and $\e_n$, namely, 
\[
\{a+X_nb\mid a,b\in \Z[X_{n-1}],a^2-X_n^2b^2=1\}=\langle -1, \e_n \rangle_{\Z[\G(\B_n/\Q)]}.
\]
\end{conj}


\section{Weber's class number problem}\label{Weber}
The aim of this section is to prove the following equivalence:
\begin{theo}\label{C-W}
\cref{conj} is true for all $n\ge0$ if and only if Weber's conjecture is true for all $n\leq 0$.
\end{theo}


\subsection{Some known results}

We prepare some known results.
Let $E_n$ be the group of units of $\B_n$ and

\[
C_n:=\left\langle -1, \z_{2^{n+2}}^{\frac{1-a}{2}}\frac{1-\z_{2^{n+2}}^a}{1-\z_{2^{n+2}}} \mid a:\text{odd integers such that}\ 1<a<2^{n+1} \right\rangle_{\Z}
\]
be its subgroup of cyclotomic units. Then $(E_n:C_n)=h_n$, by  \cite[Lemma 8.1 and Theorem 8.2]{Washington}.
Noticing that $3$ is a generator of {$(\Z/2^{n+2}\Z)^*/\{\pm1\}$} and that
\[
1+X_n=\z_{2^{n+2}}^{\frac{1-3}{2}}\frac{1-\z_{2^{n+2}}^3}{1-\z_{2^{n+2}}},
\]
by \cite[Proposition 8.11]{Washington}, we have
\[
C_n=\langle 1+X_n \rangle_{\Z[\G(\B_n/\Q)]}.
\]

We set $G_{n/n-1}=\G(\B_n/\B_{n-1})$ and define $\sigma_{n/n-1}$ to be the non-trivial element of $G_{n/n-1}$.
We note that $\s_{n/n-1}(X_n)=-X_n$.
We define a relative norm map by 
\[
N_{n/n-1}:\B_n\longrightarrow \B_{n-1};x\mapsto x\s_{n/n-1}(x).
\]
\begin{lem}\label{lem}
The restrictions $N_{n/n-1}|_{E_n}:E_n\longrightarrow E_{n-1}$ and $N_{n/n-1}|_{C_n}:C_n\longrightarrow C_{n-1}$ are well-defined and surjective.
\end{lem}

\begin{proof}

Let $\hat{H}^r(G_{n/n-1},E_n)$ be the $r\hi$th Tate cohomology group. It suffices to show that $\hat{H}^0(G_{n/n-1},E_n)=\{1\}$ for the surjectivity of $N_{n/n-1}|_{E_n}:E_n\longrightarrow E_{n-1}$.
Yokoi \cite[Lemma 3]{Yokoi} showed that
\[
Q(E_n)=\frac{|\hat{H}^0(G_{n/n-1},E_n)|}{|\hat{H}^1(G_{n/n-1},E_n)|}=\frac{1}{2}.
\]
Therefore, it suffices to show that $|\hat{H}^1(G_{n/n-1},E_n)|=2$. Let $H_{n-1}$ be the maximal unramified abelian extension of $\B_{n-1}$. Then we have $\B_n\cap H_{n-1}=\B_{n-1}$ because $\B_n/\B_{n-1}$ ramifies at {the prime ideal lying above $2$}. Furthermore, $\B_n/\B_{n-1}$ ramifies at only one prime, then $\B_n/\B_{n-1}$ satisfied the assumption of \cite[Theorem 1]{Yokoi}. Thus we have $h_{n-1}=|\text{Cl}_n^{G_{n/n-1}}|$.
Since we have $2\nmid h_{n-1}$ by Weber, we get $|\hat{H}^1(G_{n/n-1},E_n)|=2$ by the Corollary of  \cite[Theorem 2]{Yokoi}.
Thus we see that $N_{n/n-1}:E_n\longrightarrow E_{n-1}$ is surjective.

Next we consider $N_{n/n-1}|_{C_n}$.
The presentation $C_n=\langle 1+X_n \rangle_{\Z[\G(\B_n/\Q)]}$ and the easy calculations $N_{n/n-1}(1+X_n)=-1-X_{n-1}$ and $N_{n/n-1}((1+X_n)\s(1+X_n)\cdots \s^{2^{n-1}-1}(1+X_n))=-1$ show that $N_{n/n-1}:C_n\longrightarrow C_{n-1}$ is well-defined and surjective, where $\s$ is a generator of $\G(\B_n/\Q)$.
\end{proof}


Set $\K=\ker (N_{n/n-1}|_{E_n})$ throughout this paper.
\cref{lem} induces
the following exact sequence;
\begin{equation}\label{exact}
0\rightarrow \K/A_n \rightarrow E_n/C_n\rightarrow E_{n-1}/C_{n-1}\rightarrow 0
\end{equation}
where $A_n:=\K\cap C_n$. By the exact sequence (\ref{exact}), Weber's conjecture is equivalent to

\begin{equation}\label{K}
(\K:A_n)=1\ \text{for all}\ n\ge1.
 \end{equation}


\subsection{Proof of \cref{C-W}}
For $\epsilon \in \K$, there exist unique $a,b\in \mathbb{Z}[X_{n-1}]$ such that $\epsilon =a+bX_n$ and we have $N_{n/n-1}(\epsilon)=a^2-b^2X_n^2$.
Thus
we have a bijection;
\[
\begin{array}{ccc}
\K & \stackrel{}{\longleftrightarrow} & \{\text{the solutions of the generalized Pell's equation }x^2-X_n^2y^2=1\} \\
\rotatebox{90}{$\in$} & & \rotatebox{90}{$\in$} \\
\e=a+X_nb & \longleftrightarrow & (a,b)
\end{array}
\]
We recall that \cref{conj} states
\[
\{a+X_nb\mid a,b\in \Z[X_{n-1}],a^2-X_n^2b^2=1\}=\langle -1, \e_n \rangle_{\Z[\G(\B_n/\Q)]}.
\]
{Therefore, \cref{conj} is equivalent to that $\K=\langle -1, \e_n \rangle_{\Z[\G(\B_n/\Q)]}$ for all $n$.
Combining this formulation and (\ref{K}), to prove \cref{C-W},}
it suffices to prove that 
\[
A_n=\langle -1, \e_n \rangle_{\Z[\G(\B_n/\Q)]}.
\]

By easy calculation, we have that
\[
\e_n=\frac{X_n+1}{X_n-1}
\]
for each $n\ge1$. Since $C_n=\langle -1, 1+X_n \rangle_{\Z[\G(\B_n/\Q)]}$, we have $\e_n\in C_n$ and $\e_n\in C_n \cap \K=A_n$. Thus we have $\langle -1, \e_n \rangle_{\Z[\G(\B_n/\Q)]}\subset A_n$.

We put $\W{N}_{n/n-1}|_{C_n}:C_n/\{\pm 1\}\longrightarrow C_{n-1}/\{\pm 1\}$.
Let $\s$ be a generator of $\G(\B_n/\Q)$.
We note that the basis of $C_n/\{\pm 1\}$ and $C_{n-1}/\{\pm 1\}$ are $\{\s(1+X_n), \s^2(1+X_n), \dots, \s^{2^n-1}(1+X_n)\}$ and  $\{\s(1+X_{n-1}), \s^2(1+X_{n-1}), \dots, \s^{2^{n-1}-1}(1+X_{n-1})\}$ respectively.
By considering the representation matrix of  $\W{N}_{n/n-1}|_{C_n}$, we see that the basis of the kernel of $\W{N}_{n/n-1}|_{C_n}$ is
\[
\left\{\s^i\left( \frac{1+X_n}{1-X_n}\right) , \prod_{j=0}^{2^{n-1}-1}\s^j\left( 1+X_n \right) \mid i=1,2,...,2^{n-1}-1 \right\}.
\]
Since $\sigma^i\left(\frac{1+X_n}{1-X_n}\right)\in \left\langle -1,\frac{X_n+1}{X_n-1} \right\rangle_{\mathbb{Z}[G_n]}$, the rest of the proof is showing that
\[
\prod_{j=0}^{2^{n-1}-1}\sigma^{j}(1+X_n)^e\in \left\langle -1, \frac{X_n+1}{X_n-1} \right\rangle_{\mathbb{Z}[G_n]}
\]
if $N_{n/n-1}\left( \prod_{j=0}^{2^{n-1}-1}\sigma^{j}(1+X_n)^e\right)=1$. Such $e$ is even because 
\[
N_{n/n-1}\left(\prod_{j=0}^{2^{n-1}-1}\sigma^{j}(1+X_n)^{e}\right)=\left( \prod_{j=0}^{2^n-1}\sigma^j(1+X_n)\right)^{e}=(-1)^{e}.
\]
Therefore it suffices to show that $ \prod_{j=0}^{2^{n-1}-1}\sigma^{j}(1+X_n)^2\in \left\langle -1,\frac{X_n+1}{X_n-1} \right\rangle_{\mathbb{Z}[G_n]}$. Since
$\prod_{j=0}^{2^n-1}\sigma^j(1+X_n)=-1$, we have
\[
\prod_{j=2^{n-1}}^{2^n-1}\sigma^j(1+X_n)^2
=-\prod_{j=0}^{2^{n-1}-1}\sigma^{j}\left(\frac{1+X_n}{1-X_n}\right)\in \left\langle -1,\frac{X_n+1}{X_n-1} \right\rangle_{\mathbb{Z}_[G_n]}.
\]
Then the assertion follows.


\section{Results on the explicit unit $\e_n$}
In this section, first we show the ``minimality'' of our explicit unit $\e_n$.
Secondly, from the Galois action on relative units and the explicitness of $\e_n$, we obtain a congruence relation formula for the ratios of the class numbers.

\subsection{The minimality of $\e_n$ in $\K$ }
For $n=1$, $\e_1=3+2\sqrt{2}$ comes from the continued fraction of $\sqrt{2}$. By the classical method, we have that $\e_1$ generates all the $\Z$-solutions of Pell's equation $x^2-2y^2=1$. This means that $\e_1$ is ``minimal'', that is,
\[
\e_1^{\frac{l}{m}}\not \in E_{1/0} \text{ for any reduced fraction } \frac{l}{m} \text{ with } {0<|\frac{l}{m}|<1}.
\]
It follows that Weber's conjecture for $n=1$.
We show that $\e_n$ is also ``minimal'' for $n\ge2$.

\begin{theo}\label{min}
$\e_n^{\frac{l}{m}}\not \in \K$ for any reduced fraction $\frac{l}{m}$ with {$0<|\frac{l}{m}|<1$}.
\end{theo}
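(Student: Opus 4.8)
The plan is to read the statement group-theoretically and reduce it to a divisibility question about the relative class number. Interpreting $\e^{l/m}\in\K$ to mean that there is some $\eta\in\K$ with $\eta^m=\e^l$, I would first choose integers $a,b$ with $al+bm=1$ (possible since $\gcd(l,m)=1$) and set $\xi=\eta^a\e^b$. Then $\xi\in\K$ and $\xi^m=\e$. Because $|l/m|<1$ with $l\neq 0$ forces $m\ge 2$, and because a proper power factors through a prime (if $m=pm'$ then $\e=(\xi^{m'})^p$ with $\xi^{m'}\in\K$), it suffices to show that $\e$ is \emph{not} a $p$-th power of an element of $\K$ for any prime $p$.

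Next I would set up the relevant module structure. For $\eta\in\K=\ker N_{n/n-1}$ one has $\eta\,\s^{2^{n-1}}(\eta)=N_{n/n-1}(\eta)=1$, so $\s^{2^{n-1}}$ acts as inversion on $\K$; hence $\overline{\K}:=\K/\{\pm1\}$ and $\overline{A_n}:=A_n/\{\pm1\}$ are modules over $\Z[\s]/(\s^{2^{n-1}}+1)\cong\Z[\z_{2^n}]$. By Dirichlet's unit theorem together with the surjectivity of $N_{n/n-1}$ (Lemma \ref{lem}), $\overline{\K}$ has $\Z$-rank $(2^n-1)-(2^{n-1}-1)=2^{n-1}$, so $\overline{\K}\otimes\Q$ is a module over the field $\Q[\s]/(\s^{2^{n-1}}+1)\cong\Q(\z_{2^n})$ of $\Q$-dimension $2^{n-1}=[\Q(\z_{2^n}):\Q]$, i.e.\ it is one-dimensional over $\Q(\z_{2^n})$. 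Since $\overline{\e}\neq 0$, it generates this line, giving $\overline{A_n}=\Z[\z_{2^n}]\overline{\e}\cong\Z[\z_{2^n}]$ with $\overline{\e}\mapsto 1$; in particular $\s^0\e,\dots,\s^{2^{n-1}-1}\e$ are multiplicatively independent.

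The heart of the argument is an index computation. Suppose $\overline{\e}=p\,\overline{\xi}$ in $\overline{\K}$. Then $N:=\Z[\z_{2^n}]\overline{\xi}$ is a cyclic, torsion-free $\Z[\z_{2^n}]$-module of full rank, hence $N\cong\Z[\z_{2^n}]$, and $\overline{A_n}=\Z[\z_{2^n}]\overline{\e}=pN$. Therefore
\[
(N:\overline{A_n})=[\Z[\z_{2^n}]:p\,\Z[\z_{2^n}]]=p^{2^{n-1}},
\]
and since $\overline{A_n}\subseteq N\subseteq\overline{\K}$ this forces $p^{2^{n-1}}\mid(\K:A_n)=h_n/h_{n-1}$, the last equality coming from the exact sequence (2) and the relation $(E_n:C_n)=h_n$ of (1). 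Thus $\e$ can be a $p$-th power in $\K$ only if $p^{2^{n-1}}\mid h_n/h_{n-1}$.

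Finally I would rule this out. For $p=2$ it is immediate from Theorem \ref{5-2}, which gives $h_n/h_{n-1}\equiv 1\pmod{2^n}$ and in particular makes $h_n/h_{n-1}$ odd; alternatively, for $n\ge 2$ one checks directly that $\e$ has a negative real conjugate (take $\tau$ with $\tau(X_n)\in(-1,1)$, so that $\tau(\e)=\tfrac{\tau(X_n)+1}{\tau(X_n)-1}<0$), whence $\e$ is not even a square. For odd $p$ and small $n$ ($n\le 6$) one has $h_n=1$, so $h_n/h_{n-1}=1$ and there is nothing to rule out. The main obstacle is the remaining case of odd $p$ with $n\ge 7$: there one must show $v_p(h_n/h_{n-1})<2^{n-1}$, i.e.\ that the relative class number carries no large prime-power divisor. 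I expect the real difficulty to lie precisely here, because no local or sign-based device can see it: $\e$ is a principal unit at the prime above $2$, and raising to an odd power is an automorphism of the pro-$2$ group of principal units, so $\e$ is locally a $p$-th power at that prime for every odd $p$. Hence this last step genuinely requires global information about $h_n/h_{n-1}$.
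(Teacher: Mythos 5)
Your reduction to the statement ``$\e$ is not a $p$-th power in $\K$ for any prime $p$'' is the same first step the paper takes, and your $p=2$ argument (a conjugate $\tau$ with $0<\tau(X_n)<1$ makes $\tau(\e)<0$, contradicting total reality) is exactly the paper's. The genuine gap is the odd-prime case, and you have diagnosed it correctly yourself: your index computation is sound as far as it goes ($\overline{\K}=\K/\{\pm1\}$ is a $\Z[\z_{2^n}]$-module whose rationalization is one-dimensional over $\Q(\z_{2^n})$, and $\overline{\e}=p\overline{\xi}$ does force $p^{2^{n-1}}\mid(\K:A_n)=h_n/h_{n-1}$), but it only trades the theorem for the divisibility claim $v_p(h_n/h_{n-1})<2^{n-1}$. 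For odd $p$ and $n\ge 7$ this is not available from anything in the paper or from known results --- it is essentially a statement of the same depth as Weber's problem itself --- so the argument does not close. (Theorem \ref{5-2} does not rescue it either: it only constrains $(k_n)_p$ modulo $2^n$, and $p^{2^{n-1}}\equiv 1\pmod{2^n}$ holds automatically for odd $p$.)

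The ingredient you are missing is analytic rather than arithmetic: the Morisawa--Okazaki bound (Proposition \ref{MO}), which asserts that every $\delta\in\K\setminus\{\pm1\}$ with $n\ge 2$ satisfies $Tr_n(\delta^2)\ge 2^n\cdot 17$. Assuming $\e^{1/p}\in\K$, the paper computes $Tr_n(\e^{2/p})$ as a sum over the $2^n$ conjugates, observes that the $2^{n-1}$ conjugates with negative $\tau(X_n)$ contribute less than $1$ each, reduces the remaining sum to the worst case $p=3$ (each base exceeds $1$, so the terms decrease in $p$), and bounds the resulting Riemann sum by the convergent integral $\int_0^{\pi/2}\bigl|\tfrac{2\cos x+1}{2\cos x-1}\bigr|^{2/3}\,dx=6.466\ldots<\tfrac{33}{4}$, which yields $Tr_n(\e^{2/p})<2^n\cdot 17$ and hence a contradiction. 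This converts the problem into a finite, checkable inequality and bypasses any knowledge of $h_n/h_{n-1}$; without it (or some substitute of comparable strength) your proposal cannot be completed.
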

\begin{proof}
Let $n\ge2$.
It suffices to show the statement in case $\frac{l}{m}=\frac{1}{p}$ for each prime $p$.
We separate the proof into two cases $p=2$ or an odd prime.

Suppose $p=2$.
If $\e_n^{1/2}\in \K\subset \mathbb{B}_n$, then its conjugates are also included in $\mathbb{B}_n$. 
For $\tau \in \G(\B_n/\Q)$, $\tau\left(\sqrt{\frac{X_n+1}{X_n-1}}\right)^2=\frac{\tau(X_n)+1}{\tau(X_n)-1}$. On the other hand, there exists $\tau \in \G(\B_n/\Q)$ such that
$0<\tau(X_n)<1$.
For such $\tau$, we have $\tau\left(\sqrt{\frac{X_n+1}{X_n-1}}\right)^2<0$ and this contradicts the fact that $\B_n$ is a totally real field.
Thus we have $\e_n^{1/2}\not \in \K$.

Now assume that $p\ge3$. By \cite[Proposition 6.6]{MO} for $n\ge2$ and $\pm1 \neq \delta \in \K$ we have 
\begin{equation}\label{MO}
{\rm Tr}_n(\delta^2)\ge 2^n\cdot17
\end{equation}
where ${\rm Tr}_n:\mathbb{B}_n\longrightarrow \mathbb{Q}$ be the trace map of $\B_n$.

Suppose that $\e_n^{1/p} \in \K$. For each $\tau \in \G(\B_n/\Q)$, the conjugate of $\e_n^{1/p}$ is $\left(\frac{\tau(X_n+1)}{\tau(X_n-1)}\right)^{1/p}$. Then we have
\[
{\rm Tr}_n\left(\epsilon^{\frac{2}{p}}\right)=\sum_{k=1}^{2^n}f_p\left(\frac{2k-1}{2^{n+1}}\pi\right),\text{ where }f_p(x):=\left| \frac{2\cos x+1}{2\cos x-1} \right|^{\frac{2}{p}}.
\]
Since $\left|2\cos((2k-1)\pi/2^{n+1})+1\right|<\left|2\cos((2k-1)\pi/2^{n+1})-1\right|$ for $k=2^{n-1}+1,\ldots,2^{n}$,
we have $f_p((2k-1)\pi/2^{n+1})<1$ for such $k$.
Therefore, by using (\ref{MO}) it suffices to show that $\sum_{k=1}^{2^{n-1}}f_p((2k-1)\pi/2^{n+1})<2^{n-1}\cdot17$.

For $k=1,\ldots,2^{n-1}$, we have $\left| (2\cos((2k-1)\pi/2^{n+1})+1)/(2\cos((2k-1)\pi/2^{n+1})-1) \right|>1$. Then we have 
\[
f_p\left(\frac{2k-1}{2^{n+1}}\pi\right)<f_3\left(\frac{2k-1}{2^{n+1}}\pi\right)
\]
for $p>3$. Therefore it suffices to show this in case $p=3$.  Thus our goal is to show that 
\[
\frac{1}{2^{n}}\sum_{k=1}^{2^{n-1}}f_3\left(\frac{2k-1}{2^{n+1}}\pi\right)<\frac{17}{2}
\]
for $n\ge2$.
Let $K$ be the integer satisfied $(2K-1)\pi/2^{n+1}<\pi/3<(2K+1)\pi/2^{n+1}$.
We write
\begin{equation}
\begin{split}
  \frac{1}{2^n}\sum_{k=1}^{2^{n-1}}f_3\left(\frac{2k-1}{2^{n+1}}\pi\right)&=\frac{1}{2^n}\sum_{k=1}^{K-1}f_3\left(\frac{2k-1}{2^{n+1}}\pi\right)+\frac{1}{2^n}f_3\left(\frac{2K-1}{2^{n+1}}\pi\right) \\
&+\frac{1}{2^n}f_3\left(\frac{2K+1}{2^{n+1}}\pi\right)+\frac{1}{2^n}\sum_{k=K+2}^{2^{n-1}}f_3\left(\frac{2k-1}{2^{n+1}}\pi\right).
\end{split}
\end{equation}
A comparison series-integral gives that
\begin{equation}
\begin{split}
&\frac{\pi}{2^n}\sum_{k=1}^{K-1}f_3\left(\frac{2k-1}{2^{n+1}}\pi\right)+\frac{\pi}{2^n}\sum_{k=K+2}^{2^{n-1}}f_3\left(\frac{2k-1}{2^{n+1}}\pi\right) \\
&<\int_{0}^{\pi/3}f_3(x)dx+\int_{\pi/3}^{\pi/2}f_3(x)dx=6.4669....
\end{split}
\end{equation}
{We used a computer for the last integral calculations}.

{
Finally, we claim that
\[
\frac{1}{2^n}f_3\left(\frac{2K-1}{2^{n+1}}\pi\right)+\ \frac{1}{2^n}f_3\left(\frac{2K+1}{2^{n+1}}\pi\right){<3}.
\]
for $n\geq 2$.
Indeed, a function $x\mapsto x\frac{2\cos(\pi/3+x)+1}{2\cos(\pi/3+x)-1}$ is increasing from $-\pi$ to $0$ on $[-\pi/3,\pi/3]$. So we have $f_3(\pi/3+x)\le (\pi/|x|)^{2/3}$ on $[-\pi/3,\pi/3]\setminus \{0\}$.
Set $r=2^{n+1}+3-6K$. So we see that $r\in\{1,5\}$, $\frac{2K-1}{2^{n+1}}\pi=\frac{\pi}{3}-\frac{r}{3\cdot 2^{n+1}}\pi$ and $\frac{2K+1}{2^{n+1}}\pi=\frac{\pi}{3}+\frac{6-r}{3\cdot 2^{n+1}}\pi$. Since $\frac{5}{3\cdot 2^{n+1}}\pi<\frac{\pi}{3}$ for $n\ge2$, we obtain that
\begin{equation}
\begin{split}
&\frac{1}{2^n}f_3\left(\frac{2K-1}{2^{n+1}}\pi\right)+\ \frac{1}{2^n}f_3\left(\frac{2K+1}{2^{n+1}}\pi\right) \\
&\le \frac{1}{2^n}\left(\frac{\pi}{\frac{1}{3\cdot 2^{n+1}}\pi}\right)^{2/3}+\frac{1}{2^n}\left(\frac{\pi}{\frac{5}{3\cdot 2^{n+1}}\pi}\right)^{2/3} \\
&= {2^{\frac{2-n}{3}}(3^{\frac{2}{3}}+\left(\frac{3}{5}\right)^{\frac{2}{3}})<3}
\end{split}
\end{equation}
for $n\geq 2$.}
Thus we have the claim and the assertion holds.
\end{proof}
\begin{rem}
  For $n=2$, we also show that $h_2=1$ by a similar method used above.
  Let $\s$ be a generator of $\G(\B_2/\Q)$.
  Since $h_1=1$, we have $h_2=(RE_2^+:A_2)$. We recall that $A_2=\langle -1,\e_2 \rangle_{\Z[\G(\B_2/\Q)]}$ and note that $(RE^+_2:A_2)<\infty$. We should show that $ \e_2^x\cdot\s\left( \e_2  \right)^y \not \in RE^+_2$ for any $x,y\in [-1/2,1/2]\cap \Q$ except for $x=y=0$. If $\e_2^x\cdot\s\left( \e_2  \right)^y \in \B_2$, then we have
  \[
  {\rm Tr}_2\left( \e_2^{2x}\cdot\s\left( \e_2  \right)^{2y}  \right)=\sum_{i=1}^{4}\s^i\left( \e_2  \right)^{2x}\cdot\s^{i+1}\left( \e_2  \right)^{2y} .
  \]
  Now we define a function $f_2(x,y)={\rm Tr}_2\left(\e_2^{2x}\cdot\s\left( \e_2  \right)^{2y}  \right)$ on $[-1/2,1/2]^2$. Since $\frac{\partial^2f_2}{\partial x^2}(x,y)$ (resp. $\frac{\partial^2f_2}{\partial y^2}(x,y))>0$ for each $y$ (resp. $x$)$\in[-1/2,1/2]^2$ and $f_2\left(\pm1/2,0\right)=f_2\left(0,\pm1/2\right)<f_2\left(\pm1/2,\pm1/2\right)$, the maximum of $f_2(x,y)$ is taken at the points $\left(\pm1/2,\pm1/2\right)$.
  We have $f_2\left(\pm1/2,\pm1/2\right)=28<2^2\cdot17$. This contradicts (\ref{MO}), so we have $RE^+_2=A_2$ and $h_2=1$.
\end{rem}

\subsection{The ratios of the class numbers}\label{gal}
We define the {\it relative class ratio} of $\B_n/\B_{n-1}$ by
\[
k_n=\frac{h_n}{h_{n-1}}
\]
for each $n> 0$.
In this subsection, we obtain a congruence relation formula for $k_n$.

{By (\ref{exact}) in Section 4}, we have 
\[
k_n=\left(\K:A_n\right).
\]
For each prime $l$, let $\left(\K/A_n\right)_l$ denotes the Sylow $l\hi$subgroup of $\K/A_n$, that is the subgroup consisting of elements of $l\hi$power order.
Let $(k_n)_l=\left|\left(\K/A_n\right)_l\right|$. The next theorem is our second main theorem.
\begin{theo} \label{5-2}
For all prime $l$ and all positive integer $n$, we have
\[
(k_n)_l\equiv 1\pmod{2^n}.
\]
\end{theo}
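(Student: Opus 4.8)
The plan is to study the $G_n$-action on each $l$-primary part of $\K/A_n$ and run an orbit-counting argument, crucially using that $G_n=\langle\s\rangle$ is cyclic of $2$-power order $2^n$. Write $M_l:=\left(\K/A_n\right)_l$, so that $(k_n)_l=|M_l|$. Since $|G_n|=2^n$, the stabilizer of any $x\in M_l$ is $\langle \s^{2^i}\rangle$ for some $i$, hence every orbit $O(x)$ has size a power of $2$ dividing $2^n$. Partitioning $M_l$ into orbits and reducing modulo $2^n$, every orbit of full size $2^n$ contributes $0$, so that
\[
(k_n)_l=|M_l|\equiv |U_l| \pmod{2^n},\qquad U_l:=\{x\in M_l : |O(x)|<2^n\}.
\]
Thus I would reduce the theorem to computing $|U_l|$ modulo $2^n$.

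Next I would identify $U_l$ explicitly. An element $x$ lies in $U_l$ exactly when its stabilizer contains $\s^{2^{n-1}}$, i.e. when $\s^{2^{n-1}}(x)=x$. The key observation, already implicit in the proof of Lemma \ref{5-1}, is that $\s^{2^{n-1}}$ acts as inversion on $\K$: for every $x\in\K$ one has $x\,\s^{2^{n-1}}(x)=N_{n/n-1}(x)=1$, whence $\s^{2^{n-1}}(x)=x^{-1}$, and this descends to $\K/A_n$ (as $A_n$ is $G_n$-stable) and to each $M_l$. Consequently $\s^{2^{n-1}}(x)=x$ is equivalent to $x^2=1$, so $U_l=M_l[2]$ is precisely the $2$-torsion subgroup of $M_l$, giving
\[
(k_n)_l\equiv |M_l[2]| \pmod{2^n}.
\]

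For odd $l$ the conclusion is then immediate: $M_l$ has odd order, so $M_l[2]=\{1\}$ and $(k_n)_l\equiv 1\pmod{2^n}$. Equivalently, every nontrivial element of $M_l$ has a full $G_n$-orbit of size $2^n$, so $|M_l|-1$ is a multiple of $2^n$. This is the main new content, and it falls out cleanly from the orbit count together with the inversion identity.

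The remaining case $l=2$ is where I expect the real obstacle to lie. Here $(k_n)_2$ is a power of $2$, so the congruence $(k_n)_2\equiv 1\pmod{2^n}$ forces $(k_n)_2=1$; equivalently one must prove $M_2=\{1\}$, i.e. $2\nmid k_n=h_n/h_{n-1}$. This cannot be extracted from the orbit count alone, since a nontrivial finite abelian $2$-group always has nontrivial $2$-torsion, so $M_2[2]\neq\{1\}$ whenever $M_2\neq\{1\}$. I would therefore import the classical oddness of the Weber class numbers (Weber's theorem that $h_n$ is odd for all $n$, so that $2\nmid k_n$), which yields $(k_n)_2=1$ and completes all cases. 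In short, the odd-prime statement is genuinely supplied by the $2$-group orbit geometry, while the $l=2$ statement coincides with, and must borrow from, the known oddness of $h_n$.
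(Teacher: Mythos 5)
Your argument is a valid derivation of the stated congruence, and for odd $l$ it is essentially the paper's own proof: the orbit count modulo $2^n$ combined with the observation that $\s^{2^{n-1}}$ acts on $\K$ as inversion (so that the elements with orbit size $<2^n$ are exactly the $2$-torsion elements) is precisely Lemma \ref{5-1} plus the first paragraph of the paper's proof, merely phrased as a direct computation of the fixed locus rather than as a contradiction with $2\nmid\left|\left(\K/A_n\right)_l\right|$. The divergence is at $l=2$. You correctly observe that the congruence there forces $(k_n)_2=1$, i.e.\ $2\nmid h_n$, but you resolve this by citing Weber's classical oddness theorem, whereas the paper proves it from scratch: Lemma \ref{5-3} shows that no nontrivial element of $\left(\K/A_n\right)_2$ is fixed by all of $G_n$ --- such a $\de$ would satisfy $\de^2\in A_n$ and would force $\sqrt{\left|\e\s(\e)\cdots\s^{2^{n-1}-1}(\e)\right|}\in\K$ with $\e=\frac{X_n+1}{X_n-1}$, which is impossible because that element has relative norm $-1$ --- and a descent on the minimal intermediate orbit size (replacing $\de$ by $\de\,\s^{2^{m-1}}(\de)$, which at least halves the orbit) then excludes orbits of size strictly between $1$ and $2^n$. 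So your claim that the $l=2$ case ``must borrow from'' Weber is false, and adopting your route would forfeit exactly what the Remark after the theorem advertises: a proof of $2\nmid h_n$ independent of Weber (modulo the induction on $h_{n-1}$ built into the setup, which the paper addresses). In short, your route buys brevity at $l=2$ by outsourcing the hard case; the paper's route buys logical independence from Weber, which is the main point of the section, while the odd-$l$ content is identical in both.
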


This theorem shows that the sequence $\{h_n\}$ is a Cauchy sequence in $2\hi$adic topology.
Thus the sequence $\{h_n\}$ converges in $\Z_2$.

\begin{rem}\label{Overlap}
{Kisilevsky also obtained the convergence of the class numbers for more general setting in \cite[Corollary 2]{Kisilevsky}.
He showed that for any $\Z_p$-extension over any global field, the sequence of the class numbers of the intermediate fields converges in $\Z_p$.
He used the direct limit of the class groups instead of the unit groups, and the proof is different from ours.
We give an extensive numerical study of the $p$-adic limits for elliptic curves and knots in \cite{Ueki-Yoshizaki}.
}
\end{rem}
We prepare two lemmas.
We note that $\G(\B_n/\Q)$ acts on $\K/A_n$ and also on $\left(\K/A_n\right)_l$.

\begin{lem} \label{5-1}
For $\delta \in \K/A_n$, let $O(\delta)$ be the $\G(\B_n/\Q)\hi$orbit of $\delta$ in $\K/A_n$.
If $|O(\delta)|<2^n$, then $\delta^2=1$ in $\K/A_n$.
\end{lem}
\begin{proof}
$|O(\delta)|<2^n$ means $\s^{2^{n-1}}(\delta)= \delta$ in $\K/A_n$. Therefore, we have $N_{n/n-1}(\delta)= \delta \s^{2^{n-1}}(\delta)= \delta^2$ in $\K/A_n$. On the other hand, 
since $\delta \in \K$, we have $N_{n/n-1}(\delta)= 1$ in $\K/A_n$. Then we have $\delta^2= 1$ in $\K/A_n$.
\end{proof}

\begin{lem} \label{5-3}
Let $\de \in \K/A_n$. If $|O(\de)|=1$, then $\de = 1$ in $\K/A_n$.
\end{lem}
\begin{proof}
Set $\e=(X_n+1)/(X_n-1)$ (abbreviate ``$n$'').
Suppose that there exists $\de \in \K/A_n$ with $\de \neq 1$ in $\K/A_n$ and $|O(\de)|=1$. 
By \cref{5-1}, we have $\de^2\in A_n$. Since $A_n=\left\langle -1,\e \right\rangle_{\Z[\G(\B_n/\Q)]}$, $\de^2$ can be represented as 
\[
\pm \e^{e_0}\s\left(\e\right)^{e_1}\dots \s^{2^{n-1}-1}\left(\e\right)^{e_{2^{n-1}-1}}
\]
by certain integers $e_i$. Therefore, we have 
\[
\de=\pm \sqrt{\left| \e^{e_0}\s\left(\e\right)^{e_1}\cdots \s^{2^{n-1}-1}\left(\e\right)^{e_{2^{n-1}-1}}\right|}.
\]
On the other hand, $|O(\de)|=1$ implies $\s(\de)= \de$ in $\left(\K/A_n\right)_2$. Therefore, we have
\begin{eqnarray*}
&&\sqrt{\left| \e^{e_0}\s\left(\e\right)^{e_1}\cdots \s^{2^{n-1}-1}\left(\e\right)^{e_{2^{n-1}-1}}\right| } \\
&=&\sqrt{\left| \s\left(\e\right)^{e_0}\s^2\left(\e\right)^{e_1}\cdots \s^{2^{n-1}}\left(\e\right)^{e_{2^{n-1}-1}}\right| } \\
&=&\sqrt{\left| \e^{-e_{2^{n-1}-1}}\s\left(\e\right)^{e_0}\cdots \s^{2^{n-1}-1}\left(\e\right)^{e_{2^{n-1}-2}}\right| } \ \text{in $\left(\K/A_n\right)_2$}.
\end{eqnarray*}
Note that $\s^{2^{n-1}}\left(\e\right)=\e^{-1}$. 
Since $\left\{ \e,\s(\e),\dots,\s^{2^{n-1}-1}(\e) \right\}$ are linearly independent over $\Z$ in $\K$,
we have 
\[
-e_{2^{n-1}-1}\equiv e_0\equiv e_1\equiv \dots \equiv e_{2^{n-1}-2}\equiv e_{2^{n-1}-1} \pmod 2.
\]
This implies that $e_i\equiv0 \pmod 2$ for all $i$ or $e_i\equiv1 \pmod 2$ for all $i$. Since $\de\neq 1$, we have $e_i=1$ for all $i$.
Then we have $\sqrt{\left| \e\s(\e)\dots \s^{2^{n-1}-1}(\e)\right|} \in \K$. 

By easy calculation, we have 
\[
(X_n+1)\s(X_n+1)\cdots \s^{2^{n-1}-1}(X_n+1)(X_n-1)\s(X_n-1)\cdots \s^{2^{n-1}-1}(X_n-1)=-1.
\]
It follows that
\[
-\e\s(\e)\cdots \s^{2^{n-1}-1}(\e)=\left( \frac{1}{(X_n-1)\cdots \s^{2^{n-1}-1}(X_n-1)} \right)^2.
\]
Thus we have
\[
\sqrt{\left| \e\s(\e)\cdots \s^{2^{n-1}-1}(\e)\right|} =\left| \frac{1}{(X_n-1)\cdots \s^{2^{n-1}-1}(X_n-1)} \right|.
\]
Since $N_{n/n-1}\left((X_n-1)\cdots \s^{2^{n-1}-1}(X_n-1)\right)=-1$ and $N_{n/n-1}(-1)=1$, we have $N_{n/n-1}(\left| \frac{1}{(X_n-1)\cdots \s^{2^{n-1}-1}(X_n-1)} \right|)=-1$. 
This contradicts
$\sqrt{\left| \e \s\left(\e\right)\cdots \s^{2^{n-1}-1}\left(\e\right)\right|}$
$ \in \ker N_{n/n-1}$. 
\end{proof}

\begin{proof}[Proof of \cref{5-2}]
First, we prove this for an odd prime $l$.
Suppose that there exists an element $\delta \neq 1$ in $\left(\K/A_n\right)_l$ such that $|O(\delta)|<2^n$. By  \cref{5-1}, the order of $\delta$ is $2$. This contradicts $2\nmid \left|\left(\K/A_n\right)_l\right|$. Therefore, all elements except $1$ in $\left(\K/A_n\right)_l$ have $2^n$ distinct conjugates. This implies the statement.

Next, we consider the case $l=2$, independently of Weber's proof.
Suppose that there exists an element $\de \neq 1$ in $\left(\K/A_n\right)_2$ such that $|O(\de)|<2^n$ and we see that $|O(\de)|>1$ by \cref{5-3}. Let $\de$ be an element with the smallest size of $|O(\de)|=2^m$. We note that $\de$ satisfies $\s^{2^m}(\de)= \de$ and $\s^{2^{m-1}}(\de)\neq \de$ in $\left(\K/A_n\right)_2$.
Since $\s^{2^{m-1}}(\de\s^{2^{m-1}}(\de))=\s^{2^{m-1}}(\de)\s^{2^m}(\de)= \s^{2^{m-1}}(\de)\de$ in $\left(\K/A_n\right)_2$, we have $|O(\de\s^{2^{m-1}}(\de))|\le 2^{m-1}$. By the assumption, we have that $\de\s^{2^{m-1}}(\de)= 1$ and $\de=\s^{2^{m-1}}(\de)^{-1}$ in $\left(\K/A_n\right)_2$.
By \cref{5-1}, we have $\s^{2^{m-1}}(\de)^{-1}= \s^{2^{m-1}}(\de)$ in $\left(\K/A_n\right)_2$.
Thus we have $\de=\s^{2^{m-1}}(\de)$ in $\left(\K/A_n\right)_2$ and this is a contradiction.
\end{proof}

\begin{rem}
  By \cref{5-2}, we have $2\nmid h_n$ for all $n\ge1$. This result was first proved by Weber \cite[Theorem C]{HWeber}, but the proof we have now given is independent of the one by Weber.
  In the  proof of \cref{5-2}, we use the fact that $N_{n/n-1}:E_n/C_n\longrightarrow E_{n-1}/C_{n-1}$ is surjective and it comes from $2\nmid h_{n-1}$ (see the proof of \cref{lem}).
  Therefore it may seem like a tautology, but if we admit $h_0=h(\Q)=1$, the proof goes well by induction without using Weber's result. 
  Moreover, our result is a much more refined version of Weber's result.
\end{rem}

\begin{rem}
  Recall that $h_6=1$, then we have $(k_7)_l=(h_7)_l$.
  By \cref{5-2}, we have
  \[
  (h_n)_l\equiv 1\pmod{2^7}
  \]
  for all odd prime $l$ and positive integer $n$.
\end{rem}

\section{Observations on the sizes of $\e_n$}\label{Observations}

In this section, by imitating the classical Pell's equation, we observe some ``sizes'' of the explicit unit $\e_n$
and state the conjecture on the minimality of $\e_n$.
By assuming the conjecture, we give an upper bound for $k_n$ for small $n$.

By embedding $l: \K\to \R^{2^{n-1}}; \e \mapsto (\log |\s^{i}(\e)|)_i$, $l(\K)$ forms a complete lattice in $\R^{2^{n-1}}$.
For a positive integer $p$, let $||x||_p=(\sum_{i=1}^{2^{n-1}}|x_i|^p)^{1/p}$ denote the $L^p$ norm of $x$ in $\R^{2^{n-1}}$.

\begin{defi}[$L^p$-minimal]
  Let $S$ be a subset of $\K$.
  For $\e \in \K\setminus\{\pm1\}$, if
  $l(\e)$ has a minimal $L^p$ norm in $l(S\setminus\{\pm1\})$,
  then $e$ is said to be $L^p$-minimal in $S$.
\end{defi}
In the case of $n=1$, if $\e \in RE_1^+$ corresponds to a fundamental solution, then $\e$ is $L^p$-minimal in $RE_1^+$ (cf.~(\ref{ClassicalMinimal})) for any $p$.
For $p=1$, $2$, we conjecture the $L^p$-minimality of $\e_n$ in $\K$ as an analogue of the case $n=1$.
\begin{conj}\label{L-minConj}
  For all $n$, $\e_n$ is $L^1$ and $L^2$-minimal in $\K$.
\end{conj}

We observe that our explicit unit $\e_n$ is $L^2$-minimal in $A_n$ for $1\leq n \leq 10$ by using Fincke--Pohst algorithm (qfminim command in PARI/GP).
Since $A_n=\K$ for $1\leq n \leq 6$, we obtain that $\e_n$ is $L^2$-minimal in $\K$.
For each $\e \in \K$, we see that $||l(\e)||_1=\log(\prod_{i=1}^{2^n}\max\{1, |\s^i(\e)|\})$, and
the value in $\log$ is called the Mahler measure of algebraic numbers.
Morisawa and Okazaki \cite{MO} investigate $\K$ by using the Mahler measure, and obtained a lower bound for $l(\K\setminus \{\pm1\})$ in $L^1$ norm as $2^{n-1}\log(2+\sqrt{5})$ (cf.~\cite[Lemma 3.2 and Theorem 5.3]{MO}).
They also obtained a lower bound in $L^2$ norm as $\sqrt{2^{n-1}}\log(2+\sqrt{5})$ (cf.~\cite[Lemma 2.5 (1)]{Morisawa-Okazaki}).
Note that these two lower bounds are processed into forms that fit our definitions.
We compare $||l(\e_n)||_p$ and lower bounds for small $n$ in \cref{Comparison}.
\begin{table}[h]
  \centering
  \begin{tabular}{|c||c|c||c|c|}
    $n$ & $||l(\e_n)||_1$ & $2^{n-1}\log(2+\sqrt{5})$ & $||l(\e_n)||_2$ & $\sqrt{2^{n-1}}\log(2+\sqrt{5})$ \\
    \hline
    $1$ & $1.76...$ & $1.44...$ & $1.76...$ & $1.44...$ \\
    $2$ & $3.22...$ & $2.88...$ & $2.35...$ & $2.04...$ \\
    $3$ & $6.28...$ & $5.77...$ & $3.54...$ & $2.88...$ \\
    $4$ & $12.47...$ & $11.54...$ & $5.04...$ & $4.08...$ \\
    $5$ & $24.89...$ & $23.09...$ & $7.20...$ & $5.77...$ \\
    $6$ & $49.76...$ & $46.19...$ & $10.22...$ & $8.16...$ \\
    $7$ & $99.52...$ & $92.39...$ & $14.48...$ & $11.54...$ \\
  \end{tabular}
  \caption{Comparison of $||l(\e_n)||_p$ and lower bounds}
  \label{Comparison}
\end{table}

In the following, by assuming that \cref{L-minConj} holds, we give upper bounds of $k_n=h_n/h_{n-1}$ for small $n$.
Let $m$ be a positive integer.
For a Lebesgue measurable set $S$ in $\R^m$, $\vol(S)$ denote the volume of $S$ in Lebesgue measure on $\R^m$.
For a complete lattice $L\subset \R^m$ with a basis ${\bf b}=\{b_1,...,b_m\}$, we define the volume of $L$ by the volume of the fundamental parallel body of ${\bf b}$,
namely, $\vol(L)=|\det([b_1...b_m])|$.
Then we have
\begin{equation}\label{volume}
  (\K:A_n)=\vol(l(A_n))/\vol(l(\K)).
\end{equation}
We use the following Blichfeldt's theorem.
Note that the following statement is processed into our settings.
\begin{theo}[{cf.~\cite[Theorem II, III]{Blichfeldt}}]\label{Blichfeldt}
  There exist $\e$, $\de \in\K\setminus\{\pm1\}$ such that
\[
  ||l(\e)||_2\leq \sqrt{\frac{2}{\pi}}\Gamma(2+2^{n-2})^{1/2^{n-1}}\vol(\K)^{1/2^{n-1}}
\]
and
\[
  ||l(\de)||_1\leq \sqrt{\frac{2^n}{\pi}}\Gamma(2+2^{n-2})^{1/2^{n-1}}\vol(\K)^{1/2^{n-1}},
\]
where $\Gamma$ is the gamma function.
\end{theo}
\cref{L-minConj} implies that $\e_n$ satisfies these inequalities.
Thus we have
\begin{equation}\label{VolumeBound1}
  \frac{\vol(A_n)}{\vol(\K)}\leq \frac{\vol(A_n)\sqrt{2^n/\pi}^{2^{n-1}}\Gamma(2+2^{n-2})}{||l(\e_n)||_1^{2^{n-1}}}
\end{equation}
and
\begin{equation}\label{VolumeBound2}
  \frac{\vol(A_n)}{\vol(\K)}\leq \frac{\vol(A_n)\sqrt{2/\pi}^{2^{n-1}}\Gamma(2+2^{n-2})}{||l(\e_n)||_2^{2^{n-1}}}.
\end{equation}
We compute the numerical values of the right-hand sides of (\ref{VolumeBound1}) and (\ref{VolumeBound2}) for each $n\leq 7$ in \cref{UpperBounds}.
\begin{table}[h]
  \centering
  \begin{tabular}{|c|r|r|}
    $n\backslash p$ & $1$ & $2$ \\
    \hline 
    $1$ &$1.06...$ &$1.06...$ \\
    $2$ &$1.35...$ &$1.27...$ \\
    $3$ &$2.51...$ &$1.55...$ \\
    $4$ &$14.44...$ &$4.89...$ \\
    $5$ &$4345.05...$ &$417.77...$ \\
    $6$ &$17992212754.52...$ &$147730099.26...$ \\
    $7$ &$14822653597271460343569281399.70...$ & $876387598588509574855259.98...$
  \end{tabular}
  \caption{Upper bounds for $k_n$ assuming \cref{L-minConj}}
  \label{UpperBounds}
\end{table}
Combining the table at $p=2$ and the fact that $h_n<10^9$ for all $n$ (cf.~\cite[Corollary 1.2]{Fukuda-Komatsu3}), we obtain $k_n=1$ for $1\leq n \leq 6$.

\begin{rem}
  By using Minkowski's convex body theorem for the $L^p$ norm open ball of the radius $||l(\e_n)||_p$, we also obtain upper bounds of $k_n$.
  In contrast to the discussion above, in this setting, the $L^1$-minimality of $\e_n$ gives more precise bound than the $L^2$-minimality.
\end{rem}

By these arguments, the resolution of \cref{L-minConj} contributes to Weber's conjecture and \cref{conj}.
However, determining the shortest vector in a lattice is generally a very difficult problem.
If we propose to approach \cref{L-minConj} by imitating the classical method in \cref{ClassicalMethod},
then we should establish ``{\it the best approximation to $X_n$ at $\Q(X_{n-1})$}''.

\section*{Acknowledgement}
The author would like to thank Tomokazu Kashio who gave the author much useful advice in this research and also Jun Ueki who gave the author much advice in writing this paper.
The author would like to thank his mother Yukari Yoshizaki for much support.
Finally, the author would like to express his special gratitude to the two dogs, Valon and Andy, who encouraged him in his hard times.
The author has been partially supported by JSPS KAKENHI Grant Number JP22J10004.

\bibliographystyle{alpha}
\bibliography{genPell}

\end{document}